\newcommand{\R}{\mathbb{R}}
\newcommand{\N}{\mathcal{N}}
\newtheorem{theorem}{Theorem}[section]
\theoremstyle{definition}
\newtheorem{definition}{Definition}[section]
\theoremstyle{remark}
\newtheorem*{remark}{Remark}
\title{\LARGE \bf
Accelerated Performance and Accelerated Learning with Discrete-Time High-Order Tuners\thanks{This work is supported by the Boeing Strategic University Initiative.}
}
\author{Yingnan Cui\thanks{Y. Cui and A.M. Annaswamy are with the Department of Mechanical Engineering, Massachusetts Institute of Technology, Cambridge, MA, 02139.} and Anuradha M. Annaswamy
}
\date{}
\begin{document}
\allowdisplaybreaks

\maketitle
\thispagestyle{empty}
\pagestyle{empty}

\begin{abstract}

  We consider two high-order tuners that have been shown to have accelerated performance, one based on Polyak's heavy ball method and another based on Nesterov's acceleration method. We show that parameter estimates are bounded and converge to the true values exponentially fast when the regressors are persistently exciting. Simulation results corroborate the accelerated performance and accelerated learning properties of these high-order tuners in comparison to algorithms based on normalized gradient descent.

\end{abstract}

\section{Introduction}
\label{sec:intro}

Adaptive control is dedicated to online decision making and parameter learning in dynamic systems in real-time \cite{Narendra2005,Ioannou1996,Annaswamy2021hist,Goodwin_1984}. The problem often consists of addressing two types of errors, one related to performance, and another related to parameter learning. The goal is to have both the performance error and parameter error converge to zero in real time. As the complexity of the dynamic systems increases and the performance specifications become more stringent, it is of importance that both of these convergences are fast. This paper addresses both of these properties in the context of a high-order tuner.


High-order tuners have their start in \cite{Morse_1992}, where a class of continuous time dynamic systems with parametric uncertainties was considered and an adaptive law that allowed the generation of parameter estimates using a high-order tuner was proposed rather than a standard gradient algorithm that is of first-order.  Parameter learning with these high-order tuners was addressed in \cite{Ortega93,Annaswamy21}. Robustness properties using these tuners was addressed in \cite{Annaswamy2003}. All of these discussions have focused entirely on continuous-time dynamic systems.

In parallel with these developments, a body of work has been ongoing in the optimization community to address accelerated convergence of the performance error \cite{Polyak64,Nesterov_1983}, and studied at length in machine learning and optimization \cite{Su14,Wibisono_2016}, and identification \cite{Gaudio20AC, Gaudio2021}. The idea here is to identify methods by which the performance error can converge to zero faster. As this has broad implications on a large number of problems in control, machine learning, and optimization, the impact of success in these investigations can be a significant one. The focus in all of these problems however is only on the performance error, often embodied by an overall loss function. They do not focus on parameter learning or the speed of parameter convergence, which is the focus of this paper.

In this paper, we consider a class of discrete-time nonlinear systems whose parameters are constant and unknown. The goal is to design an estimator that will learn the parameters using real-time data. It is shown that under conditions of persistent excitation, high-order tuners can be utilized to ensure accelerated learning, i.e., exponential convergence of the parameter estimates to their true value. This problem has been addressed at length in continuous-time both using standard gradient based adaptive laws \cite{Narendra2005,Ioannou1996} and high-order tuners \cite{Ortega93, Annaswamy21}. Parameter convergence has been addressed in discrete-time only using gradient laws in \cite{Goodwin_1984, Anderson_1982} but not using high-order tuners. Together with the results in \cite{Polyak64, Nesterov_1983, Gaudio20AC, Gaudio2021}, this paper lays the foundation for new algorithms that can provide both accelerated performance and accelerated learning, and represents its main contribution. Our focus is on high-order tuners that are based on two popular methods, one based on the Heavy Ball (HB) method \cite{Polyak64} that includes a momentum-like terms, and the other based on Nesterov's algorithm (NA) \cite{Nesterov_1983} that includes terms based on both momentum and acceleration. While these approaches can be used readily for static problems of decision-making, when dynamic features are present, there needs to be significant variations in the underlying algorithm \cite{Gaudio20AC,Gaudio2021}. As shown in these papers, appropriate variations need to be made when the underlying regressors vary with time, using which a bounded parameter estimation approach can be derived. No discussions were carried out, however, in \cite{Gaudio20AC,Gaudio2021} regarding accelerated learning.



The main challenge that a high-order tuner introduces for establishing accelerated learning in parameters is the presence of additional state variables that introduces a filtering action between the exogeneous signal that is persistently exciting and the parameter that is to be estimated. These state variables have to be shown to behave in a way such that the excitation is transmitted through them without any attenuation, thereby allowing the estimate to continue converging to the true value. This property is even more difficult to establish in a discrete-time system than a continuous-time one due to the underlying support set properties and challenges in ensuring that the step sizes in the updates remain bounded. We successfully addressed these challenges through novel tools that leverage both properties of the high-order tuner and those of persistent excitation. This is in contrast to its continuous-time counterparts where exponential convergence of the parameter estimates to their true values is obtained by appealing primarily to specific properties of persistent excitation.

The paper is organized as follows. Section \ref{sec:problem-statement} presents problem statement. Section \ref{sec:heavy-ball} and section \ref{sec:nesterov} contain the main results of the paper. Section \ref{sec:heavy-ball} shows the exponential convergence of the HB method. Section \ref{sec:nesterov} shows the exponential convergence of the NA algorithm. We show simulation results in Section \ref{sec:sim} and provide concluding remarks in Section \ref{sec:conclusion}.
\section{Problem Statement}
\label{sec:problem-statement}
We consider a class of discrete-time nonlinear plant models of the form
\begin{equation}
  \label{eq:10}
    y_k = -\sum_{i=1}^{n}a_{i}^*y_{k-i} + \sum_{j=1}^{m}b_{j}^*u_{k-j-d} + \sum_{\ell=1}^pc^*_{\ell}f_\ell(y_{k-1}, \ldots, y_{k-n}, u_{k-1-d}, \ldots, u_{k-m-d}),
\end{equation}
where $a_{i}^*$, $b_{j}^*$ and $c_{\ell}^*$ are unknown parameters that are constant and need to be identified, and $d$ is a known time-delay. The function $f_\ell$ is an analytic function and is assumed to be such that the system in \eqref{eq:10} is bounded-input-bounded-output (BIBO) stable. Denote $z_{k-1} = [y_{k-1}, \ldots, y_{k-n}]^\top$ and $v_{k-d-1} = [u_{k-1-d}, \ldots, u_{k-m-d}]^\top$. We rewrite \eqref{eq:10} in the form of a linear regression
\begin{equation}
  \label{eq:11}
  y_k = \phi_k^\top\theta^*,
\end{equation}
where $\phi_k = [z_{k-1}^\top, v_{k-d-1}^\top, f_1(z_{k-1}^\top, v_{k-d-1}^\top), \ldots, \allowbreak f_p(z_{k-1}^\top, v_{k-d-1}^\top)]^\top$ is a regressor determined by exogenous signals and $\theta^* = [a_{1}^*, \ldots, a_{n}^*, b_{1}^*, \ldots, b_{m}^*,c_{1}^*, \ldots, c_{\ell}^*]^\top$ is the underlying unknown parameter vector. We propose to identify the parameter $\theta^*$ as $\theta_k$ using an estimator
\begin{equation}
  \hat{y}_k = \phi_k^\top\theta_k,
  \label{eq:estimate}
\end{equation}
which leads to a prediction error
\begin{equation}
  \label{eq:12}
  e_{y,k} = \phi_k^{\top}\tilde{\theta}_k,
\end{equation}
where $e_{y,k} = \hat{y}_k - y_k$ is the output prediction error and $\tilde{\theta}_k = \theta_k - \theta^*$ is the parameter error. The goal of parameter identification is to design an iterative procedure such that the parameter error $\|\tilde{\theta}_k\|$ converges to zero exponentially fast.

The iterative procedure for estimating the parameters is based on a squared loss function,
\begin{equation}
  \label{eq:7}
  L_k(\theta_k) = \frac{1}{2}e_{y,k}^2 = \frac{1}{2}\tilde{\theta}_k^\top\phi_k\phi_k^\top\tilde{\theta}_k,
\end{equation}
where the subscript $k$ in $L_k$ denotes $k$th iteration. In the literature, a normalized gradient descent algorithm has been shown to be stable although having a slow convergence rate \cite{Goodwin_1984}
\begin{equation}
  \label{eq:gd}
  \theta_{k+1} = \theta_k - \alpha\frac{\nabla L_k(\theta_k)}{\N_k}, \quad 0 < \alpha < 2,
\end{equation}
where $\N_k$ is a normalizing signal and is defined as $\N_k = 1 + \|\phi_k\|^2$. The following definitions will be utilized for proving the main results.

\begin{definition}
The regressor $\phi_k$ is said to satisfy the persistent excitation (PE) condition over an interval $\Delta T$, if for all unit vectors $w\in\R^n$,
\begin{equation}
  \label{eq:pe}
  \frac{1}{\Delta T}\sum_{i=k-\Delta T}^{k-1}\left\|\phi_k^\top w\right\| \geq \epsilon.
\end{equation}
\label{def:pe}
\end{definition}
\begin{definition}[From \cite{Luenberger1997}]
  For any fixed $p\in[1,\infty)$, a sequence of scalars $\xi=\{\xi_0, \xi_1, \ldots\}$ is defined to belong to $\ell_p$ if
  \begin{equation}
    \label{eq:2}
    \|\xi\|_\infty \equiv \left(\lim_{k\rightarrow\infty}\sum_{i=0}^k\|\xi_i\|^p\right)^{1/p} < \infty.
  \end{equation}
  When $p=\infty$, $\xi\in\ell_\infty$ if
  \begin{equation}
    \label{eq:3}
    \|\xi\|_{\ell_\infty} \equiv \sup_{i\geq 0}\|\xi_i\| < \infty
  \end{equation}
\end{definition}

\section{Main Result 1: Accelerated Learning with Heavy Ball Method}
\label{sec:heavy-ball}
The idea behind the Heavy Ball method can be explained as follows. Rather than using only the past iterate $\theta_k$ to determine $\theta_{k+1}$, the Heavy Ball method uses the past two iterates $\theta_k$ and $\theta_{k-1}$ so that an additional momentum term may contribute to an accelerated convergence of the loss function. This takes the form of a \textit{higher}-order tuner of the form
\begin{equation}
  \theta_{k+1} = \theta_k-\bar\gamma\frac{\nabla L_k{(\theta_{k})}}{\N_k} + \bar\beta (\theta_k-\theta_{k-1}),
  \label{eq:HT-HB}
\end{equation}
where $\bar\gamma$ and $\bar\beta$ are hyperparameters and the last term corresponds to the momentum addition. This high-order tuner can be rewritten in the form of two first-order iterates
\begin{equation}
\begin{split}
  \label{eq:hb}
  \vartheta_{k+1} &= \vartheta_k - \gamma\frac{\nabla L_k(\theta_{k+1})}{\N_k},\\
  \theta_{k+1} &= \theta_k - \beta(\theta_k - \vartheta_k),
\end{split}
\end{equation}
where $\beta$ and $\gamma$ are the positive constants that will be suitably chosen. We denote this as an HB algorithm. It is easy to show that the estimates $\vartheta_k$ and $\theta_k$ are bounded using the following Lyapunov function:
\begin{equation}
  \label{eq:lyap}
  V_k = \frac{1}{\gamma}\|\vartheta_k - \theta^*\|^2 + \frac{1}{\gamma}\|\theta_k - \vartheta_k\|^2
\end{equation}
for all $0 < \beta < 2$ and $0 < \gamma \leq \frac{\beta(2 - \beta)}{16}$ \cite{Gaudio20AC}. In what follows, we show that the HT in \eqref{eq:hb} guarantees accelerated learning. A few parameters are defined first.

Let
\begin{equation*}
  c_1 = \frac{11}{8}, \quad c_2= \frac{21}{32},
\end{equation*}
\begin{equation*}
  \epsilon_1 = \frac{\epsilon}{\max_k\{\sqrt{\N_k}\}},
\end{equation*}
\begin{equation}
0 < \lambda < 1,
\label{eq:lambda}
\end{equation}
\begin{equation}
  0 < \eta < \frac{\epsilon_1}{\gamma|1 - \beta|}.
  \label{eq:eta}
\end{equation}
We define
\begin{equation}
  \mu = \min\{\mu_1, \mu_2, \mu_3\},
  \label{eq:mu}
\end{equation}
where
\begin{align*}
  \mu_1 &= \frac{c_1\lambda\gamma\eta^2}{\Delta T},\\
  \mu_2 &= \frac{c_2\Delta T(\epsilon_1 - \gamma\eta|1 - \beta|)^2\lambda\gamma}{(1+\gamma\Delta T)^2},\\
  \mu_3 &= c_1\frac{1}{\Delta T}(1 - \lambda)\gamma.
\end{align*}
\begin{theorem}
\label{theo:1}
  If the regressor $\phi_k$ satisfies the definition in \eqref{eq:pe}, with $0 < \beta < 2$ and $0 < \gamma \leq \frac{\beta(2 - \beta)}{8}$, the update law in \eqref{eq:hb} will result in (i) $\vartheta_k - \theta^* \in
 \ell_\infty$, $\theta_k - \vartheta_k\in \ell_\infty$, and (ii) $V_k \leq \exp\left(-\mu\left\lfloor\frac{k}{\Delta T}\right\rfloor\right)V_0$, where $\mu$ is defined in \eqref{eq:mu}.
\end{theorem}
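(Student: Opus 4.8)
The plan is to split the argument into the two claims. Part (i), boundedness, is already essentially available: the Lyapunov function $V_k$ in \eqref{eq:lyap} is non-increasing for $0<\beta<2$ and $0<\gamma\le\beta(2-\beta)/16$ by the cited result \cite{Gaudio20AC}, and the hypothesis $\gamma\le\beta(2-\beta)/8$ together with the definition of $\mu$ must be arranged so that $V_{k+1}\le V_k$ still holds (one would verify the one-step decrease inequality $V_{k+1}-V_k\le -c\,\gamma\,e_{y,k}^2/\N_k^2$ for some constant $c>0$ under the stated looser bound on $\gamma$, perhaps at the cost of a smaller constant). From $V_k\le V_0$ one reads off $\vartheta_k-\theta^*\in\ell_\infty$ and $\theta_k-\vartheta_k\in\ell_\infty$ immediately, and hence $\theta_k-\theta^*\in\ell_\infty$ as well.

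For part (ii), the exponential decrease, I would work block-by-block over windows of length $\Delta T$ and show that for each $k$ one has $V_{k+\Delta T}\le e^{-\mu}V_k$; iterating this $\lfloor k/\Delta T\rfloor$ times gives the claim. Within a single block the key is to convert the accumulated excitation $\frac{1}{\Delta T}\sum\|\phi_i^\top w\|\ge\epsilon$ from Definition \ref{def:pe} into a genuine decrease of $V$ along the direction $w=\tilde\theta_k/\|\tilde\theta_k\|$. The one-step decrease of $V$ is of order $e_{y,i}^2/\N_i^2=(\phi_i^\top\tilde\theta_i)^2/\N_i^2$, so summing over the block we need a lower bound on $\sum_{i}(\phi_i^\top\tilde\theta_i)^2$ in terms of $\|\tilde\theta_k\|^2$. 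This is where the filtering state $\vartheta$ enters: $\tilde\theta_i$ is not constant across the block, it drifts by the update law, so $\phi_i^\top\tilde\theta_i$ is not directly comparable to $\phi_i^\top\tilde\theta_k$. The device is to split $\tilde\theta_i=\tilde\theta_k-(\tilde\theta_k-\tilde\theta_i)$, bound the drift $\|\tilde\theta_k-\tilde\theta_i\|$ and $\|\vartheta_i-\vartheta_k\|$ over the block by a multiple of $\sqrt{V_k}$ times $\gamma\Delta T$ (using the update equations and the $\ell_\infty$ bounds), and absorb the cross terms. The parameter $\eta$ appears precisely here: one distinguishes the case where the drift is small relative to $\eta\|\tilde\theta_k\|$ — then $\phi_i^\top\tilde\theta_i$ inherits the PE lower bound up to the $(\epsilon_1-\gamma\eta|1-\beta|)$ factor seen in $\mu_2$ — from the case where the drift is large, in which case the estimates have already moved substantially and $V$ must have decreased by an amount reflected in $\mu_1$ and $\mu_3$, with $\lambda$ interpolating between the two regimes.

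The main obstacle I anticipate is exactly the control of the intra-block drift and the consequent case analysis. In continuous time one writes $\tilde\theta(t)-\tilde\theta(s)=\int_s^t\dot{\tilde\theta}$, bounds the integrand, and the PE integral passes through cleanly; in discrete time the analogous telescoping sum of the $\N_i$-normalized gradient steps must be shown to stay bounded \emph{and small enough} that the excitation is "transmitted without attenuation," and the normalization $\N_k=1+\|\phi_k\|^2$ (hence the appearance of $\epsilon_1=\epsilon/\max_k\sqrt{\N_k}$ and $\max_k\{\sqrt{\N_k}\}$, which is finite by the BIBO assumption guaranteeing $\phi_k\in\ell_\infty$) interacts awkwardly with the summation. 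Getting the constants $c_1=11/8$, $c_2=21/32$ and the three candidate exponents $\mu_1,\mu_2,\mu_3$ to line up is the delicate bookkeeping; conceptually, the heart of the proof is showing that the second state $\vartheta$ cannot "hide" the excitation from $\theta$, i.e. that $\|\theta_k-\vartheta_k\|$ being the second term of $V_k$ means any persistent mismatch forces continued decrease. I would therefore organize the argument as: (1) restate the one-step $V$-decrease inequality under the relaxed $\gamma$ bound; (2) sum over a block and lower-bound the gain by $\sum_i(\phi_i^\top\tilde\theta_i)^2/\N_i^2$; (3) bound the block drift of $\tilde\theta$ and $\vartheta$ by $O(\gamma\Delta T\sqrt{V_k})$; (4) split into the small-drift and large-drift cases, applying PE in the first and the accumulated gain in the second; (5) combine to get $V_{k+\Delta T}\le(1-\mu)V_k\le e^{-\mu}V_k$; (6) iterate.
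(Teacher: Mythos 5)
Your overall architecture matches the paper's: establish a one-step Lyapunov decrement, sum it over a window of length $\Delta T$, split into cases governed by $\lambda$ and $\eta$, invoke PE in the surviving case, and iterate $V_k\leq(1-\mu)V_{k-\Delta T}\leq e^{-\mu}V_{k-\Delta T}$. However, there is a genuine gap at the center of your plan: you posit that the one-step decrease is of order $e_{y,k}^2/\N_k^2=(\phi_k^\top\tilde\theta_k)^2/\N_k^2$ and that the window argument should lower-bound $\sum_i(\phi_i^\top\tilde\theta_i)^2$ in terms of $\|\tilde\theta_k\|^2$, applying PE along $w=\tilde\theta_k/\|\tilde\theta_k\|$. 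A decrement of that form cannot close the argument, because $V_k$ is built from $\|\vartheta_k-\theta^*\|^2$ and $\|\theta_k-\vartheta_k\|^2$, not from $\|\tilde\theta_k\|^2$: since $\tilde\theta_k=(\theta_k-\vartheta_k)+(\vartheta_k-\theta^*)$, the two components can nearly cancel, leaving $\|\tilde\theta_k\|$ arbitrarily small while $V_k$ remains large, so no bound proportional to $\|\tilde\theta_k\|^2$ can force $V_k\leq(1-\mu)V_{k-\Delta T}$. The paper avoids this by carrying out the (lengthy) expansion of $\Delta V_k$ to extract the specific decrement $\Delta V_k\leq -c_1\|\theta_k-\vartheta_k\|^2-\tfrac{c_2}{\N_k}\|(\vartheta_k-\theta^*)^\top\phi_k\|^2$ with $c_1=11/8$, $c_2=21/32$; the two negative terms are aligned with the two components of $V$, the case split is on which component dominates, and PE is applied in the direction of the \emph{inner} error $\vartheta_{k-\Delta T}-\theta^*$ (not $\tilde\theta$). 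This derivation is also where the hypothesis $\gamma\leq\beta(2-\beta)/8$ is actually used, which you defer with ``one would verify.''

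A second, smaller discrepancy is in how the intra-window drift is controlled. You propose bounding $\|\vartheta_i-\vartheta_{k-\Delta T}\|$ by $O(\gamma\Delta T\sqrt{V_{k-\Delta T}})$; the paper instead bounds the drift by $\gamma\Delta T$ times the very sums $W_1=\sum_i\|\theta_i-\vartheta_i\|$ and $W_2=\sum_i\N_i^{-1/2}\|(\vartheta_i-\theta^*)^\top\phi_i\|$ that appear in the decrement, yielding the self-referential inequality $W_2\geq\Delta T\epsilon_1\|\vartheta_{k-\Delta T}-\theta^*\|-\gamma\Delta T\,W_2-\gamma|1-\beta|\Delta T\,W_1$, which is then solved for $W_2$ to produce the factor $(\epsilon_1-\gamma\eta|1-\beta|)\Delta T/(1+\gamma\Delta T)$ in $\mu_2$. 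A crude $O(\sqrt{V})$ drift bound would not reproduce this and would likely force an additional smallness condition on $\gamma\Delta T$. Your identification of the roles of $\eta$ (small versus large accumulated $W_1$) and $\lambda$ (which component of $V$ dominates) is correct in spirit, but the quantities they compare are $W_1$ versus $\eta\|\vartheta_{k-\Delta T}-\theta^*\|$ and $\|\vartheta_{k-\Delta T}-\theta^*\|^2$ versus $\lambda\gamma V_{k-\Delta T}$, again centered on $\vartheta-\theta^*$ rather than on $\tilde\theta$.
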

Theorem \ref{theo:1} (i) establishes boundedness of the parameter estimates and Theorem \ref{theo:1} (ii) shows that exponential convergence of the parameter error towards zero occurs if $\phi_k$ is persistently exciting. In order to prove these results, we will examine the behavior of the parameter estimates over an interval $\Delta T$ over which $\phi_k$ is persistently exciting.

\begin{proof}
  Expanding $\Delta V_k := V_{k +1} - V_k$, we have
\begin{align*}
  &\Delta V_k \\
  &= \frac{1}{\gamma}\|\vartheta_{k+1} - \theta^*\|^2 + \frac{1}{\gamma}\|\theta_{k+1} - \vartheta_{k+1}\|^2 - \frac{1}{\gamma}\|\vartheta_k - \theta^*\|^2 - \frac{1}{\gamma}\|\theta_k - \vartheta_k\|^2\\
  &= \frac{1}{\gamma}\left\|\vartheta_k - \gamma\frac{\nabla L_k(\theta_{k+1})}{\N_k} - \theta^*\right\|^2 - \frac{1}{\gamma}\|\vartheta_k - \theta^*\|^2 + \frac{1}{\gamma}\left\|\theta_k - \beta(\theta_k - \vartheta_k) - \vartheta_k + \frac{\gamma}{\N_k}\nabla L_k(\theta_{k+1})\right\|^2\\
  &\quad - \frac{1}{\gamma}\|\theta_k - \vartheta_k\|^2\\
  &= \frac{\gamma}{\N_k^2}\|\nabla L_k(\theta_{k+1})\|^2 - \frac{2}{\N_k}(\vartheta_k - \theta^*)^\top\nabla L_k(\theta_{k+1}) + \frac{1}{\gamma}\|\theta_k - \vartheta_k\|^2 - \frac{1}{\gamma}\|\theta_k - \vartheta_k\|^2\\
  &\quad -\frac{\beta(2 - \beta)}{\gamma}\|\theta_k - \vartheta_k\|^2 + \frac{2}{\N_k}(1 - \beta)(\theta_k - \vartheta_k)^\top \nabla L_k(\theta_{k+1}) + \frac{\gamma}{\N_k^2}\|\nabla L_k(\theta_{k+1})\|^2\\
  &= \frac{2\gamma}{\N_k^2}\|\nabla L_k(\theta_{k+1})\|^2 - \frac{2}{\N_k}(\theta_{k+1} - \theta^*)^\top \nabla L_k(\theta_{k+1}) - \frac{\beta(2 - \beta)}{\gamma}\|\theta_k - \vartheta_k\|^2\\
  &\quad +\frac{2}{\N_k}(1 - \beta)(\theta_k - \vartheta_k)^\top\nabla L_k(\theta_{k+1}) - \frac{2}{\N_k}(\vartheta_k - \theta_{k+1})^\top \nabla L_k(\theta_{k+1})\\
  &= -2\left(1 - \frac{\gamma\phi_k^\top\phi_k^\top}{\N_k}\right)\frac{\tilde{\theta}_{k+1}^\top\nabla L_k(\theta_{k+1})}{\N_k} - \frac{\beta(2 - \beta)}{\gamma}\|\theta_k - \vartheta_k\|^2 + \frac{4}{\N_k}(1 - \beta)(\theta_k - \vartheta_k)^\top\nabla L_k(\theta_{k+1})\\
  &\leq -\frac{11}{8}\|\theta_k - \vartheta_k\|^2 + \frac{1}{\N_k}\Bigg\{-\|\tilde{\theta}_{k+1}^\top\phi_k\|^2 - 4\|\phi_k\|^2\|\theta_k - \vartheta_k\|^2 + 4\|\theta_k - \vartheta_k\|\|\phi_k\|\|\tilde{\theta}_{k+1}^\top\phi_k\|\\
  &\quad -4\|\theta_k - \vartheta_k\|^2 - \frac{21}{8}\|\phi_k\|^2\|\theta_k - \vartheta_k\|^2 - \frac{7}{8}\|\tilde{\theta}_{k+1}^\top\phi_k\|^2\Bigg\}\\
  &\leq \frac{1}{\N_k}\left\{-\left[\|\tilde{\theta}_{k+1}^\top\phi_k\| - 2\|\phi_k\|\|\theta_k - \vartheta_k\|\right]^2 - 4\|\theta_k - \vartheta_k\|^2 - \frac{21}{8}\|\phi_k\|^2\|\theta_k - \vartheta_k\|^2 - \frac{7}{8}\|\tilde{\theta}_{k+1}^\top\phi_k\|^2\right\}\\
  &= -\frac{11}{8}\|\theta_k - \vartheta_k\|^2 + \frac{1}{\N_k}\Bigg\{-\left[\|\tilde{\theta}_{k+1}^\top\phi_k\| - 2\|\phi_k\|\|\theta_k - \vartheta_k\|\right]^2 - 4\|\theta_k - \vartheta_k\|^2 - \frac{21}{8}\|\phi_k\|^2\|\theta_k - \vartheta_k\|^2 \\
  &\quad- \frac{7}{8}\|(\theta_{k+1} - \vartheta_k + \vartheta_k - \theta^*)^\top\phi_k\|^2\Bigg\}\\
  &= -\frac{11}{8}\|\theta_k - \vartheta_k\|^2 + \frac{1}{\N_k}\Bigg\{-\left[\|\tilde{\theta}_{k+1}^\top\phi_k\| - 2\|\phi_k\|\|\theta_k - \vartheta_k\|\right]^2 - 4\|\theta_k - \vartheta_k\|^2 - \frac{21}{8}\|\phi_k\|^2\|\theta_k - \vartheta_k\|^2\\
  &\quad- \frac{7}{8}\left\|(1 - \beta)(\theta_k - \vartheta_k)^\top\phi_k + (\vartheta_k - \theta^*)^\top\phi_k\right\|^2\Bigg\}\\
  &= -\frac{11}{8}\|\theta_k - \vartheta_k\|^2 + \frac{1}{\N_k}\Bigg\{-\left[\|\tilde{\theta}_{k+1}^\top\phi_k\| - 2\|\phi_k\|\|\theta_k - \vartheta_k\|\right]^2 - 4\|\theta_k - \vartheta_k\|^2 - \frac{21}{8}\|(\theta_k - \vartheta_k)^\top\phi_k\|^2\\
  &\quad- \frac{7}{8}\left[\left\|(1 - \beta)(\theta_k - \vartheta_k)^\top\phi_k\right\|^2 + \|(\vartheta_k - \theta^*)^\top\phi_k\|^2 + 2(1 - \beta)(\theta_k - \vartheta_k)^\top\phi_k\phi_k^\top(\vartheta_k - \theta^*)\right]\Bigg\}\\
  &= -\frac{11}{8}\|\theta_k - \vartheta_k\|^2 + \frac{1}{\N_k}\Bigg\{-\left[\|\tilde{\theta}_{k+1}^\top\phi_k\| - 2\|\phi_k\|\|\theta_k - \vartheta_k\|\right]^2 - 4\|\theta_k - \vartheta_k\|^2 - \frac{21}{8}\|(\theta_k - \vartheta_k)^\top\phi_k\|^2\\
  &\quad- \frac{7}{8}\left[4\left\|(1 - \beta)(\theta_k - \vartheta_k)^\top\phi_k\right\|^2 + \frac{1}{4}\|(\vartheta_k - \theta^*)^\top\phi_k\|^2 + 2(1 - \beta)(\theta_k - \vartheta_k)^\top\phi_k\phi_k^\top(\vartheta_k - \theta^*)\right]\\
  &\quad +\frac{21}{8}(1 - \beta)^2\|(\theta_k - \vartheta_k)^\top\phi_k\|^2 - \frac{21}{32}\|(\vartheta_k - \theta^*)^\top\phi_k\|^2\Bigg\}\\
  &\leq -\frac{11}{8}\|\theta_k - \vartheta_k\|^2 + \frac{1}{\N_k}\Bigg\{-\left[\|\tilde{\theta}_{k+1}^\top\phi_k\| - 2\|\phi_k\|\|\theta_k - \vartheta_k\|\right]^2 - 4\|\theta_k - \vartheta_k\|^2 \\
  &\quad- \frac{7}{8}\left[2\left\|(1 - \beta)(\theta_k - \vartheta_k)^\top\phi_k\right\| - \frac{1}{2}\|(\vartheta_k - \theta^*)^\top\phi_k\|\right]^2\\
  &\quad -\frac{21}{32}\|(\vartheta_k - \theta^*)^\top\phi_k\|^2\Bigg\}\\
  &\leq 0
\end{align*}
  proving Theorem \ref{theo:1} (i).

  Summing $V_k$ from $V_{k-\Delta T}$ to $V_{k}$, we have
  \begin{align}
    &V_k - V_{k - \Delta T}\nonumber\\
    &= V_k - V_{k-1} + \cdots + V_{k-\Delta T + 1} - V_{k-\Delta T}\nonumber\\
    &\leq -c_1\!\sum_{i=k-\Delta T}^{k-1}\|\theta_i - \vartheta_i\|^2 - c_2\!\sum_{i=k-\Delta T}^{k-1} \frac{1}{\N_i}\|(\vartheta_i - \theta^*)^\top\phi_i\|^2\nonumber\\
    &\leq -c_1\frac{1}{\Delta T}\Bigg[\underbrace{\sum_{i=k-\Delta T}^{k-1}\left\|\theta_i - \vartheta_i\right\|}_{W_1}\Bigg]^2 - c_2\frac{1}{\Delta T}\Bigg[\underbrace{\sum_{i=k - \Delta T}^{k-1}\frac{1}{\sqrt{\N_i}}\left\|(\vartheta_i - \theta^*)^\top\phi_i\right\|}_{W_2}\Bigg]^2
      \label{eq:upper}
  \end{align}
where we applied the Cauchy-Schwarz inequality for the second inequality. We consider two cases: $\|\vartheta_{k-\Delta T} - \theta^*\|^2 \geq \lambda\gamma V_{k-\Delta T}$ and $\|\vartheta_{k-\Delta T} - \theta^*\|^2 \leq \lambda\gamma V_{k-\Delta T}$, where $\lambda$ satisfies \eqref{eq:lambda}.\\
  \noindent\textbf{Case 1}: $\|\vartheta_{k-\Delta T} - \theta^*\|^2 \geq \lambda\gamma V_{k-\Delta T}$
  
    \noindent\textbf{(a)} If $W_1 \geq \eta \|\vartheta_{k-\Delta T} - \theta^*\|$, where $\eta$ satisfies \eqref{eq:eta}, then
    \begin{align*}
      V_k - V_{k-\Delta T} &\leq -c_1\frac{1}{\Delta T}\eta^2 \|\vartheta_{k-\Delta T} - \theta^*\|^2\\
                           &\leq -c_1\frac{1}{\Delta T}\lambda\gamma\eta^2 V_{k-\Delta T}
    \end{align*}
    and
    \begin{equation}
      V_k \leq \left(1 - c_1\frac{1}{\Delta T}\lambda\gamma\eta^2\right) V_{k-\Delta T}.
      \label{eq:v11}
    \end{equation}
    
  \noindent\textbf{(b)} If $W_1 \leq \eta \|\vartheta_{k-\Delta T} - \theta^*\|$, then
  \begin{align*}
    W_2 &= \sum_{i = k - \Delta T}^{k-1}\frac{1}{\sqrt{\N_i}}\|(\vartheta_i - \vartheta_{k - \Delta T} + \vartheta_{k - \Delta T} - \theta^*)^\top\phi_i\|\\
    &\geq \sum_{i = k - \Delta T}^{k-1}\frac{1}{\sqrt{\N_i}}\|(\vartheta_{k-\Delta T} - \theta^*)^\top\phi_i\| - \sum_{i = k - \Delta T}^{k-1}\frac{1}{\sqrt{\N_i}}\|(\vartheta_i - \vartheta_{k - \Delta T})^\top\phi_i\|\\
    &\geq \Delta T \epsilon_1\|\vartheta_{k-\Delta T} - \theta^*\| - \Delta T \! \sup_{i\in[k - \Delta T, k-1]}\!\|\vartheta_i - \vartheta_{k - \Delta T}\|\\
    &\geq \Delta T \epsilon_1\|\vartheta_{k-\Delta T} - \theta^*\| - \Delta T \sum_{i=k - \Delta T}^{k-2}\|\vartheta_{i+1} - \vartheta_i\|\\
    &\geq \Delta T \epsilon_1\|\vartheta_{k-\Delta T} - \theta^*\| - \gamma \Delta T \sum_{i=k - \Delta T}^{k - 1}\left\|\frac{\phi_i\phi_i^\top\tilde{\theta}_{i+1}}{\N_i}\right\|\\
    &\geq \Delta T \epsilon_1\|\vartheta_{k-\Delta T} - \theta^*\| - \gamma\Delta T \sum_{i = k - \Delta T}^{k - 1}\frac{1}{\sqrt{\N_i}}\|(\theta_{i+1} - \vartheta_i + \vartheta_i - \theta^*)^\top\phi_i\|\\
    &\geq \Delta T \epsilon_1\|\vartheta_{k-\Delta T} - \theta^*\| - \gamma\Delta T \!\sum_{i=k-\Delta T}^{k-1}\!\frac{1}{\sqrt{\N_i}}\|(\vartheta_i - \theta^*)^\top\phi_i\| - \gamma\Delta T\sum_{i=k-\Delta T}^{k-1}\frac{|1-\beta|}{\sqrt{\N_i}}\|(\theta_i - \vartheta_i)^\top\phi_i\|\\
    &\geq \Delta T \epsilon_1\|\vartheta_{k-\Delta T} - \theta^*\| - \gamma\Delta T\! \underbrace{\sum_{i=k-\Delta T}^{k-1}\!\frac{1}{\sqrt{\N_i}}\|(\vartheta_i - \theta^*)^\top\phi_i\|}_{W_2} - \gamma|1-\beta|\Delta T\underbrace{\sum_{i=k-\Delta T}^{k-1}\|\theta_i - \vartheta_i\|}_{W_1}
  \end{align*}
From \eqref{eq:eta}, $\epsilon_1 > \gamma\eta|1 - \beta|$. Therefore
\begin{align}
  W_2 \geq \frac{\Delta T}{1 + \gamma\Delta T}(\epsilon_1 - \gamma\eta|1-\beta|)\|\vartheta_{k-\Delta T} - \theta^*\| \geq 0.
  \label{eq:w21}
\end{align}
Substitute \eqref{eq:w21} into \eqref{eq:upper}, we obtain
\begin{align*}
  V_k - V_{k-\Delta T} &\leq -c_2\frac{1}{\Delta T} W_2^2\nonumber\\
                       &\leq -c_2\frac{\Delta T}{(1 + \gamma\Delta T)^2}(\epsilon_1 - \gamma\eta|1 - \beta|)^2\|\vartheta_{k-\Delta T} - \theta^*\|^2\nonumber\\
                       &\leq -c_2 \frac{\Delta T}{(1 + \gamma\Delta T)^2}(\epsilon_1 - \gamma\eta|1 - \beta|)^2\lambda\gamma V_{k-\Delta T}
\end{align*}
And thus
\begin{equation}
  \label{eq:v12}
  V_k \leq \left[1 -c_2 \frac{\Delta T}{(1 + \gamma\Delta T)^2}(\epsilon_1 - \gamma\eta|1 - \beta|)^2\lambda\gamma\right] V_{k-\Delta T}.
\end{equation}

\noindent\textbf{Case 2}: $\|\vartheta_{k-\Delta T}-\theta^*\|^2\leq \lambda\gamma V_{k-\Delta T}$

From
\begin{equation*}
  V_{k-\Delta T} = \frac{1}{\gamma}\|\theta_{k-\Delta T} - \vartheta_{k-\Delta T}\|^2 + \frac{1}{\gamma}\|\vartheta_{k-\Delta T} - \theta^*\|^2,
\end{equation*}
we immediately get
\begin{equation}
  \|\theta_{k-\Delta T} - \vartheta_{k-\Delta T}\|^2 \geq (1 - \lambda)\gamma V_{k-\Delta T}.
  \label{eq:41}
\end{equation}
Since $W_1 \geq \|\theta_{k-\Delta T} - \vartheta_{k-\Delta T}\|$, from \eqref{eq:upper},
\begin{align*}
  V_k - V_{k-\Delta T} &\leq -c_1\frac{1}{\Delta T} W_1^2\\
                       &\leq -c_1\frac{1}{\Delta T} \|\theta_{k-\Delta T} - \vartheta_{k-\Delta T}\|^2\\
                       &\leq -c_1\frac{1}{\Delta T} (1-\lambda)\gamma V_{k-\Delta T}
\end{align*}
From which we obtain
\begin{equation}
  \label{eq:v13}
  V_k \leq \left[1 - c_1\frac{1}{\Delta T}(1 - \lambda)\gamma\right] V_{k-\Delta T}
\end{equation}
Considering \eqref{eq:v11}, \eqref{eq:v12} and \eqref{eq:v13}, we have
\begin{equation*}
  V_k \leq (1 - \mu) V_{k-\Delta T},
\end{equation*}
where $\mu$ is defined in \eqref{eq:mu}. Collecting the terms, we obtain
\begin{equation*}
V_k \leq \exp\left(-\mu\left\lfloor\frac{k}{\Delta T}\right\rfloor\right)V_0,
\end{equation*}
proving Theorem \ref{theo:1} (ii).
\end{proof}
\begin{remark}
  It should be noted that the proof of Theorem \ref{theo:1} followed by considering two different cases. In case 1, we assumed that the parameter error in $\vartheta$ was a significant fraction of the overall Lyapunov function. We showed then that a decrease in $V_k$ is either due to the nature of the high order tuner, or persistent excitation of $\phi_k$. In case 2, the parameter difference between $\theta$ and $\vartheta$ was a significant fraction of the Lyapunov function, which directly leads to a decrease in $V_k$ due to the nature of the HB algorithm in \eqref{eq:HT-HB}.
\end{remark}

\section{Main Result 2: Accelerated Learning with Nesterov's Acceleration}
\label{sec:nesterov}
The idea behind the second HT is motivated by \cite{Nesterov_1983} and an important stability-preserving variation of the same proposed in \cite{Gaudio20AC}. Similar to \eqref{eq:HT-HB}, this high-order tuner uses not just the past iterate $\theta_k$ to determine $\theta_{k+1}$, but also $\theta_{k-1}$. However, in addition to the momentum term, an acceleration-based addition is included as well. A simplified version of the Nesterov's algorithm in \cite{Nesterov_1983} is of the form
\begin{equation}
  \label{eq:ht1}
  \theta_{k+1}=\theta_k-\bar\gamma\frac{\nabla L_k(\theta_{k}+\bar\beta (\theta_k-\theta_{k-1}))}{\N_k} + \bar\beta (\theta_k-\theta_{k-1}),
\end{equation}
where $\bar\gamma$ and $\bar\beta$ are hyperparameters and the second term corresponds to the momentum addition, as it computes the gradient based on an updated parameter estimate. It was shown in \cite{Gaudio20AC} that such an update cannot be shown to be stable when adversarial regressors, which may be time-varying, are present. An important modification of the same was introduced, which can be expressed in the form of two first-order iterates
\begin{equation}
  \label{eq:ht}
  \begin{split}
    \vartheta_{k+1} &= \vartheta_k - \gamma\frac{\nabla L_{k}(\theta_{k+1})}{\N_k},\\
    \theta_{k+1} &= \bar\theta_k - \beta(\bar\theta_k - \vartheta_k),\\
    \bar\theta_k &= \theta_k - \gamma\beta\frac{\nabla L_k(\theta_k)}{\N_k},
  \end{split}
\end{equation}
where $\beta$ and $\gamma$ are chosen such that $0 < \beta < 1$ and $0 < \gamma \leq \frac{\beta(2 - \beta)}{16 + \beta^2}$. We denote this as the NA algorithm. The update law in \eqref{eq:ht} was shown in \cite{Gaudio20AC} to be stable using \eqref{eq:lyap} as the Lyapunov function.  Before stating the main result, we define a few parameters.

Let
\begin{equation*}
  c_3 = \frac{7}{4}, \quad c_4 = \frac{9}{16},
\end{equation*}
\begin{equation*}
  \epsilon_2 = \frac{\epsilon}{\max_k\{\sqrt{\N_k}\}},
\end{equation*}
\begin{equation}
0 < \lambda < 1,
\label{eq:lambda2}
\end{equation}
\begin{equation}
0 < \eta < \frac{\epsilon_2}{\gamma(1 - \beta)},
\label{eq:eta2}
\end{equation}
\begin{equation}
0 < \zeta < \frac{1 - \gamma\beta}{\gamma(1 + \beta - \gamma\beta)},
\label{eq:zeta2}
\end{equation}
and define
\begin{equation}
  \label{eq:mu2}
  \mu = \min\{\mu_1, \mu_2, \mu_3, \mu_4\},
\end{equation}
where
\begin{align*}
  \mu_1 &= \frac{c_3\lambda\gamma\eta^2}{\Delta T},\\
  \mu_2 &= \frac{c_4\Delta T[\epsilon_2 - \gamma\eta(1 - \beta)]^2\lambda\gamma}{(1 + \gamma\Delta T)^2},\\
  \mu_3 &= \frac{c_4\gamma\zeta^2(1 - \lambda)}{\Delta T},\\
  \mu_4 &= \frac{c_3\xi^2(1 - \lambda)\gamma}{\Delta T},
\end{align*}
and
\begin{equation*}
  \xi = \frac{\Delta T - \gamma\zeta\Delta T - \frac{\zeta\gamma\beta(1 + \Delta T)}{1 - \gamma\beta}}{1 + \gamma(1 - \beta)\Delta T + \beta\Delta T + \zeta\gamma\beta\frac{1 + \Delta T}{1 - \gamma\beta}}.
\end{equation*}

We now state our second main result:
\begin{theorem}
  If the regressor $\phi_k$ satisfies the PE definition in \eqref{eq:pe}, with $0 < \beta < 1$ and $0 < \gamma \leq \frac{\beta(2 - \beta)}{8 + \beta^2}$, the update law in \eqref{eq:ht} will result in (i) $\vartheta_k - \theta^* \in
 \ell_\infty$, $\theta_k - \vartheta_k\in \ell_\infty$, and (ii) $V_k \leq \exp\left(-\mu\left\lfloor\frac{k}{\Delta T}\right\rfloor\right)V_0$, where $\mu$ is defined in \eqref{eq:mu2}.
  \label{theo:2}
\end{theorem}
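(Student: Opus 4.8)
The plan is to run the same two-stage argument used for Theorem~\ref{theo:1}, adapted to the three-line update~\eqref{eq:ht}. For part (i) I would expand $\Delta V_k := V_{k+1}-V_k$ with $V_k$ as in~\eqref{eq:lyap}, substitute the three lines of~\eqref{eq:ht} using $\nabla L_k(\theta)=\phi_k\phi_k^\top(\theta-\theta^*)$, and exploit the identities $\theta_{k+1}-\vartheta_k=(1-\beta)(\bar\theta_k-\vartheta_k)$ and
\[
  \bar\theta_k-\vartheta_k=\left(I-\tfrac{\gamma\beta}{\N_k}\phi_k\phi_k^\top\right)(\theta_k-\vartheta_k)-\tfrac{\gamma\beta}{\N_k}\phi_k\phi_k^\top(\vartheta_k-\theta^*),
\]
in which the leading operator has smallest eigenvalue $\ge 1-\gamma\beta>0$ (the hypotheses force $\gamma\beta<1$). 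The extra gradient evaluated at $\theta_k$ inside $\bar\theta_k$ produces more cross terms than in the Heavy Ball computation, but after repeated completions of squares, using $0<\beta<1$ and $0<\gamma\le\frac{\beta(2-\beta)}{8+\beta^2}$ to dominate every positive contribution, I expect a decrement of the form
\[
  \Delta V_k \;\le\; -c_3\|\bar\theta_k-\vartheta_k\|^2 \;-\; \frac{c_4}{\N_k}\|(\vartheta_k-\theta^*)^\top\phi_k\|^2 \;\le\; 0 ,
\]
which makes $\{V_k\}$ nonincreasing and hence gives $\vartheta_k-\theta^*\in\ell_\infty$, $\theta_k-\vartheta_k\in\ell_\infty$, proving (i). Note that the ``momentum gap'' penalised here is $\bar\theta_k-\vartheta_k$, i.e.\ the gap \emph{after} the Nesterov correction (playing the role $\theta_k-\vartheta_k$ played in the Heavy Ball case), which is the key structural difference driving the rest of the proof.

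For part (ii) I would sum this decrement over $i\in[k-\Delta T,k-1]$ and apply Cauchy--Schwarz as in~\eqref{eq:upper}, obtaining
\[
  V_k-V_{k-\Delta T}\le-\frac{c_3}{\Delta T}W_1^2-\frac{c_4}{\Delta T}W_2^2,\qquad
  W_1=\sum_{i=k-\Delta T}^{k-1}\|\bar\theta_i-\vartheta_i\|,\qquad
  W_2=\sum_{i=k-\Delta T}^{k-1}\frac{\|(\vartheta_i-\theta^*)^\top\phi_i\|}{\sqrt{\N_i}} ,
\]
and then split on whether $\|\vartheta_{k-\Delta T}-\theta^*\|^2\ge\lambda\gamma V_{k-\Delta T}$ (Case 1) or $\le$ (Case 2), with $\lambda$ as in~\eqref{eq:lambda2}. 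In Case 1: if $W_1\ge\eta\|\vartheta_{k-\Delta T}-\theta^*\|$ the $W_1^2$ term gives $V_k\le(1-\mu_1)V_{k-\Delta T}$; otherwise $W_1$ is small and I would lower-bound $W_2$ by the persistent-excitation estimate $\Delta T\,\epsilon_2\|\vartheta_{k-\Delta T}-\theta^*\|$ minus the drift of $\vartheta_i$ from $\vartheta_{k-\Delta T}$, the latter controlled via $\vartheta_{j+1}-\vartheta_j=-\gamma\phi_j\phi_j^\top\tilde\theta_{j+1}/\N_j$ with $\tilde\theta_{j+1}$ expanded into its $(\vartheta_j-\theta^*)$ and $(\bar\theta_j-\vartheta_j)$ parts, so $W_2$ recurs on both sides; condition~\eqref{eq:eta2} keeps the bracket $\epsilon_2-\gamma\eta(1-\beta)$ positive and squaring yields $V_k\le(1-\mu_2)V_{k-\Delta T}$.

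Case 2 is where the argument genuinely departs from Heavy Ball. Here $\|\theta_{k-\Delta T}-\vartheta_{k-\Delta T}\|^2\ge(1-\lambda)\gamma V_{k-\Delta T}$, but because $W_1$ now measures $\|\bar\theta_i-\vartheta_i\|$ rather than $\|\theta_i-\vartheta_i\|$, one cannot simply keep a single summand as in~\eqref{eq:41}: the Nesterov correction may have shrunk $\|\bar\theta_i-\vartheta_i\|$ relative to $\|\theta_i-\vartheta_i\|$. I would therefore split again on the size of $W_2$, governed by $\zeta$ from~\eqref{eq:zeta2}. If $W_2\ge\zeta\|\theta_{k-\Delta T}-\vartheta_{k-\Delta T}\|$, the $W_2^2$ term directly gives $V_k\le(1-\mu_3)V_{k-\Delta T}$. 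If instead $W_2\le\zeta\|\theta_{k-\Delta T}-\vartheta_{k-\Delta T}\|$, I would propagate the large initial gap through the window: from $\bar\theta_i-\vartheta_i=(\theta_i-\vartheta_i)-\frac{\gamma\beta}{\N_i}\phi_i\phi_i^\top(\theta_i-\theta^*)$ and $\theta_i-\vartheta_i=(\theta_{k-\Delta T}-\vartheta_{k-\Delta T})+(\theta_i-\theta_{k-\Delta T})-(\vartheta_i-\vartheta_{k-\Delta T})$, bounding the increments $\|\theta_{j+1}-\theta_j\|\le\beta\|\theta_j-\vartheta_j\|+(1-\beta)\gamma\beta\|(\theta_j-\theta^*)^\top\phi_j\|/\sqrt{\N_j}$ and $\|\vartheta_{j+1}-\vartheta_j\|\le\gamma\|\tilde\theta_{j+1}^\top\phi_j\|/\sqrt{\N_j}$, and re-expressing $\|\theta_j-\vartheta_j\|$ and $\|(\theta_j-\theta^*)^\top\phi_j\|/\sqrt{\N_j}$ in terms of $\|\bar\theta_j-\vartheta_j\|$ and $\|(\vartheta_j-\theta^*)^\top\phi_j\|/\sqrt{\N_j}$ (inverting $I-\frac{\gamma\beta}{\N}\phi\phi^\top$, which is where the factors $1/(1-\gamma\beta)$ in the definition of $\xi$ come from), one obtains a self-referential inequality $(1+\cdots)W_1\ge\Delta T\|\theta_{k-\Delta T}-\vartheta_{k-\Delta T}\|-(\cdots)W_2-\cdots$. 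Substituting $W_2\le\zeta\|\theta_{k-\Delta T}-\vartheta_{k-\Delta T}\|$ and solving for $W_1$ gives $W_1\ge\xi\|\theta_{k-\Delta T}-\vartheta_{k-\Delta T}\|$, with~\eqref{eq:zeta2} being exactly the condition that makes the numerator of $\xi$, hence $\xi$, strictly positive; then $V_k\le(1-\mu_4)V_{k-\Delta T}$. Collecting all cases gives $V_k\le(1-\mu)V_{k-\Delta T}$ with $\mu$ as in~\eqref{eq:mu2}, and iterating over $\lfloor k/\Delta T\rfloor$ windows together with $1-\mu\le e^{-\mu}$ yields $V_k\le\exp(-\mu\lfloor k/\Delta T\rfloor)V_0$, which is (ii).

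The hard part will be the drift bookkeeping in Case 2: one must track how the momentum step, the Nesterov gradient correction inside $\bar\theta_k$, and the $\vartheta$-update jointly feed the per-step increments of both parameter streams, invert the contraction $I-\frac{\gamma\beta}{\N_k}\phi_k\phi_k^\top$, and then solve the resulting self-referential bound for $W_1$ with a \emph{positive} factor $\xi$. This is the discrete-time manifestation of the ``filtering action through the extra state variables'' mentioned in the introduction; it has no analogue in the Heavy Ball proof, where a single summand of $W_1$ already lower-bounds the gap in Case 2, nor in continuous time, and it is precisely what forces the additional parameters $\zeta$, $\xi$ and the extra minimum term $\mu_4$ in~\eqref{eq:mu2}.
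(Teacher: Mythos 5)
Your proposal is correct and follows essentially the same route as the paper's proof: the same Lyapunov decrement $\Delta V_k \le -c_3\|\bar\theta_k-\vartheta_k\|^2 - \tfrac{c_4}{\N_k}\|(\vartheta_k-\theta^*)^\top\phi_k\|^2$ for part (i), and for part (ii) the same window sum with Cauchy--Schwarz, the same two cases with the $\eta$-subcases in Case~1 and the additional $\zeta$-subcases in Case~2, including the self-referential bound (the paper's $W_3 \le W_2 + W_1 + \gamma\beta W_3$, i.e.\ your inversion of $I-\tfrac{\gamma\beta}{\N_k}\phi_k\phi_k^\top$) that produces the factor $\xi$ and the rate $\mu_4$. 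You also correctly identified the key structural point---that $W_1$ measures $\|\bar\theta_i-\vartheta_i\|$ rather than $\|\theta_i-\vartheta_i\|$, which is exactly what forces the extra subcases in Case~2.
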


\begin{proof}
  Expanding $\Delta V_k := V_{k +1} - V_k$, we have
\begin{align*}
  \Delta V_k
  &=\frac{1}{\gamma}\lVert \vartheta_{k+1}-\theta^*\rVert^2+\frac{1}{\gamma}\lVert \theta_{k+1}-\vartheta_{k+1}\rVert^2-\frac{1}{\gamma}\lVert \vartheta_k-\theta^*\rVert^2-\frac{1}{\gamma}\lVert \theta_k-\vartheta_k\rVert^2\\
  &=\frac{1}{\gamma}\lVert (\vartheta_k-\theta^*)-\frac{\gamma}{\N_k}\nabla L_k(\theta_{k+1})\rVert^2-\frac{1}{\gamma}\lVert \vartheta_k-\theta^*\rVert^2\\
  &\quad +\frac{1}{\gamma}\lVert \bar{\theta}_k-\beta(\bar{\theta}_k-\vartheta_k)-\vartheta_k+\frac{\gamma}{\N_k}\nabla L_k(\theta_{k+1})\rVert^2-\frac{1}{\gamma}\lVert \theta_k-\vartheta_k\rVert^2\\
  &=\frac{\gamma}{\N_k^2}\lVert\nabla L_k(\theta_{k+1})\rVert^2-\frac{2}{\N_k}(\vartheta_k-\theta^*)^\top\nabla L_k(\theta_{k+1})\\
  &\quad+\frac{1}{\gamma}\lVert \bar{\theta}_k-\vartheta_k\rVert^2-\frac{1}{\gamma}\lVert \theta_k-\vartheta_k\rVert^2-\frac{\beta(2-\beta)}{\gamma}\lVert \bar{\theta}_k-\vartheta_k\rVert^2\\
  &\quad +\frac{2}{\N_k}(1-\beta)(\bar{\theta}_k-\vartheta_k)^\top\nabla L_k(\theta_{k+1})+\frac{\gamma}{\N_k^2}\lVert\nabla L_k(\theta_{k+1})\rVert^2\\
  &=\frac{2\gamma}{\N_k^2}\lVert\nabla L_k(\theta_{k+1})\rVert^2-\frac{2}{\N_k}(\theta_{k+1}-\theta^*)^\top\nabla L_k(\theta_{k+1})\\
  &\quad+\frac{1}{\gamma}\lVert \bar{\theta}_k-\vartheta_k\rVert^2-\frac{1}{\gamma}\lVert \theta_k-\vartheta_k\rVert^2-\frac{\beta(2-\beta)}{\gamma}\lVert \bar{\theta}_k-\vartheta_k\rVert^2\\
  &\quad +\frac{2}{\N_k}(1-\beta)(\bar{\theta}_k-\vartheta_k)^\top\nabla L_k(\theta_{k+1})-\frac{2}{\N_k}(\vartheta_k-\theta_{k+1})^\top\nabla L_k(\theta_{k+1})\\
  &=-2\left(1-\frac{\gamma\phi_k^\top\phi_k}{\N_k}\right)\frac{\tilde{\theta}_{k+1}^\top\nabla L_k(\theta_{k+1})}{\N_k}\\
  &\quad+\frac{1}{\gamma}\lVert \bar{\theta}_k-\vartheta_k\rVert^2-\frac{1}{\gamma}\lVert \theta_k-\vartheta_k\rVert^2-\frac{\beta(2-\beta)}{\gamma}\lVert \bar{\theta}_k-\vartheta_k\rVert^2\\
  &\quad +\frac{4}{\N_k}(1-\beta)(\bar{\theta}_k-\vartheta_k)^\top\nabla L_k(\theta_{k+1})\\
  &=-2\left(1-\frac{\gamma\phi_k^\top\phi_k}{\N_k}\right)\frac{\tilde{\theta}_{k+1}^\top\nabla L_k(\theta_{k+1})}{\N_k}\\
  &\quad+\frac{\gamma\beta^2}{\N_k^2}\lVert\nabla L_k(\theta_k)\rVert^2-\frac{2\beta}{\N_k}(\theta_k-\vartheta_k)^\top\nabla L_k(\theta_k)\\
  &\quad -\frac{\beta(2-\beta)}{\gamma}\lVert \bar{\theta}_k-\vartheta_k\rVert^2+\frac{4}{\N_k}(1-\beta)(\bar{\theta}_k-\vartheta_k)^\top\nabla L_k(\theta_{k+1})\\
  &=-\frac{\beta(2-\beta)}{\gamma}\lVert \bar{\theta}_k-\vartheta_k\rVert^2 + \frac{1}{\N_k}\bigg\{-2\left(1-\frac{\gamma\phi_k^\top\phi_k}{\N_k}\right)\tilde{\theta}_{k+1}^\top\nabla L_k(\theta_{k+1})+\frac{\gamma\beta^2}{\N_k}\lVert\nabla L_k(\theta_k)\rVert^2\\
  &\quad -2\beta(\theta_k-\vartheta_k)^\top\nabla L_k(\theta_k) +4(1-\beta)(\bar{\theta}_k-\vartheta_k)^\top\nabla L_k(\theta_{k+1})\bigg\}\\
  &=-\frac{\beta(2-\beta)}{\gamma}\lVert \bar{\theta}_k-\vartheta_k\rVert^2 \\
  &\quad + \frac{1}{\N_k}\bigg\{-2\left(1-\frac{\gamma\phi_k^\top\phi_k}{\N_k}\right)\tilde{\theta}_{k+1}^\top\nabla L_k(\theta_{k+1})-\frac{\gamma\beta^2}{\N_k}\lVert\nabla L_k(\theta_k)\rVert^2-2\beta(\bar{\theta}_k-\vartheta_k)^\top\nabla L_k(\theta_k)\\
  &\quad +4(1-\beta)(\bar{\theta}_k-\vartheta_k)^\top\nabla L_k(\theta_{k+1})\bigg\}\\
  &=-\frac{\beta(2-\beta)}{\gamma}\lVert \bar{\theta}_k-\vartheta_k\rVert^2\\
  &\quad+\frac{1}{\N_k}\bigg\{-2\left(1-\frac{\gamma\phi_k^\top\phi_k}{\N_k}\right)\tilde{\theta}_{k+1}^\top\nabla L_k(\theta_{k+1})-\frac{\gamma\beta^2}{\N_k}\lVert\nabla L_k(\theta_k)\rVert^2-2\beta(\bar{\theta}_k-\vartheta_k)^\top\phi_k\phi^\top_k\tilde{\theta}_k \\
  &\quad +4(1-\beta)(\bar{\theta}_k-\vartheta_k)^\top\nabla L_k(\theta_{k+1})\bigg\}\\
  &=-\frac{\beta(2-\beta)}{\gamma}\lVert \bar{\theta}_k-\vartheta_k\rVert^2\\
  &\quad +\frac{1}{\N_k}\bigg\{-2\left(1-\frac{\gamma\phi_k^\top\phi_k}{\N_k}\right)\tilde{\theta}_{k+1}^\top\nabla L_k(\theta_{k+1})-\frac{\gamma\beta^2}{\N_k}\lVert\nabla L_k(\theta_k)\rVert^2\\
  &\quad+4(1-\beta)(\bar{\theta}_k-\vartheta_k)^\top\nabla L_k(\theta_{k+1})\\ &\quad-2\beta(\bar{\theta}_k-\vartheta_k)^\top\phi_k\phi^\top_k\left[\theta_k-\theta^*+(1-\beta)\bar{\theta}_k+ \beta\vartheta_k-(1-\beta)\bar{\theta}_k-\beta\vartheta_k\right]\bigg\}\\
  &=-\frac{\beta(2-\beta)}{\gamma}\lVert \bar{\theta}_k-\vartheta_k\rVert^2\\
  &\quad + \frac{1}{\N_k}\bigg\{-2\left(1-\frac{\gamma\phi_k^\top\phi_k}{\N_k}\right)\tilde{\theta}_{k+1}^\top\nabla L_k(\theta_{k+1})-\frac{\gamma\beta^2}{\N_k}\lVert\nabla L_k(\theta_k)\rVert^2\\
  &\quad-2\beta(\bar{\theta}_k-\vartheta_k)^\top\nabla L_k(\theta_{k+1})\\
  &\quad+4(1-\beta)(\bar{\theta}_k-\vartheta_k)^\top\nabla L_k(\theta_{k+1})-2\beta(\bar{\theta}_k-\vartheta_k)^\top\phi_k\phi^\top_k\left[\theta_k-\bar{\theta}_k+\beta(\bar{\theta}_k-\vartheta_k)\right]\bigg\}\\
  &=-\frac{\beta(2-\beta)}{\gamma}\lVert \bar{\theta}_k-\vartheta_k\rVert^2\\
  &\quad+\frac{1}{\N_k}\bigg\{-2\left(1-\frac{\gamma\phi_k^\top\phi_k}{\N_k}\right)\tilde{\theta}_{k+1}^\top\nabla L_k(\theta_{k+1})-\frac{\gamma\beta^2}{\N_k}\lVert\nabla L_k(\theta_k)\rVert^2 \\
  &\quad -2\gamma\beta^2(\bar{\theta}_k-\vartheta_k)^\top\frac{\phi_k\phi^\top_k}{\N_k}\nabla L_k(\theta_k)  \\
  &\quad +4(1-\frac{3}{2}\beta)(\bar{\theta}_k-\vartheta_k)^\top\nabla L_k(\theta_{k+1})-2\beta^2(\bar{\theta}_k-\vartheta_k)^\top\phi_k\phi^\top_k(\bar{\theta}_k-\vartheta_k)\bigg\}\\
  &\leq -4\|\bar\theta_k - \vartheta_k\|^2\\
  &\quad+\frac{1}{\N_k}\bigg\{-2\left(1-\frac{\gamma\phi_k^\top\phi_k}{\N_k}\right)\tilde{\theta}_{k+1}^\top\nabla L_k(\theta_{k+1}) - 4\lVert\phi_k\rVert^2\lVert \bar{\theta}_k-\vartheta_k\rVert^2\\
  &\quad+4(1-\frac{3}{2}\beta)(\bar{\theta}_k-\vartheta_k)^\top\nabla L_k(\theta_{k+1})\\
  &\quad-\frac{\gamma\beta^2}{\N_k}\lVert\nabla L_k(\theta_k)\rVert^2-\beta^2\lVert\phi_k\rVert^2\lVert \bar{\theta}_k-\vartheta_k\rVert^2-2\beta^2(\bar{\theta}_k-\vartheta_k)^\top\gamma\frac{\phi_k\phi^\top_k}{\N_k}\nabla L_k(\theta_k)\\
  &\quad-(4+\beta^2)\lVert \bar{\theta}_k-\vartheta_k\rVert^2-2\beta^2(\bar{\theta}_k-\vartheta_k)^\top\phi_k\phi^\top_k(\bar{\theta}_k-\vartheta_k)\bigg\}\\
  &\leq -4\|\bar\theta_k - \vartheta_k\|^2\\
  &\quad+\frac{1}{\N_k}\bigg\{-\frac{7}{4}\lVert\tilde{\theta}_{k+1}^\top\phi_k\rVert^2-4\lVert\phi_k\rVert^2\lVert \bar{\theta}_k-\vartheta_k\rVert^2+4\lVert\bar{\theta}_k-\vartheta_k\rVert\lVert\phi_k\rVert\lVert\tilde{\theta}_{k+1}^\top\phi_k\rVert\\
  &\quad -\frac{\gamma\beta^2}{\N_k}\lVert\nabla L_k(\theta_k)\rVert^2-\beta^2\lVert\phi_k\rVert^2\lVert \bar{\theta}_k-\vartheta_k\rVert^2+2\beta^2\lVert\bar{\theta}_k-\vartheta_k\rVert\frac{\lVert\sqrt{\gamma}\phi_k\rVert^2}{\N_k}\lVert\nabla L_k(\theta_k)\rVert\\
  &\quad -(4+\beta^2)\lVert \bar{\theta}_k-\vartheta_k\rVert^2-2\beta^2(\bar{\theta}_k-\vartheta_k)^\top\phi_k\phi^\top_k(\bar{\theta}_k-\vartheta_k)\bigg\}\\
  &\leq -4\|\bar\theta_k - \vartheta_k\|^2\\
    &\quad+ \frac{1}{\N_k}\bigg\{-\lVert\tilde{\theta}_{k+1}^\top\phi_k\rVert^2-4\lVert\phi_k\rVert^2\lVert \bar{\theta}_k-\vartheta_k\rVert^2+4\lVert\bar{\theta}_k-\vartheta_k\rVert\lVert\phi_k\rVert\lVert\tilde{\theta}_{k+1}^\top\phi_k\rVert \\
  &\quad -\frac{\gamma\beta^2}{\N_k}\lVert\nabla L_k(\theta_k)\rVert^2-\beta^2\lVert\phi_k\rVert^2\lVert \bar{\theta}_k-\vartheta_k\rVert^2+\frac{2\sqrt{\gamma}\beta^2}{\sqrt{\N_k}}\lVert\bar{\theta}_k-\vartheta_k\rVert\lVert\phi_k\rVert\lVert\nabla L_k(\theta_k)\rVert \\
  &\quad -\frac{3}{4}\lVert\tilde{\theta}_{k+1}^\top\phi_k\rVert^2-(4+\beta^2)\lVert \bar{\theta}_k-\vartheta_k\rVert^2-2\beta^2(\bar{\theta}_k-\vartheta_k)^\top\phi_k\phi^\top_k(\bar{\theta}_k-\vartheta_k)\bigg\}\\
  &\leq-4\|\bar\theta_k - \vartheta_k\|^2\\
  &\quad + \frac{1}{\N_k}\bigg\{-\left[\lVert\tilde{\theta}_{k+1}^\top\phi_k\rVert-2\lVert\phi_k\rVert\lVert \bar{\theta}_k-\vartheta_k\rVert\right]^2 \\
  &\quad -\left[\frac{\sqrt{\gamma}\beta}{\sqrt{\N_k}}\lVert\nabla L_k(\theta_k)\rVert-\beta\lVert\phi_k\rVert\lVert \bar{\theta}_k-\vartheta_k\rVert\right]^2 \\
  &\quad -\frac{3}{4}\lVert\tilde{\theta}_{k+1}^\top\phi_k\rVert^2-(4+\beta^2)\lVert \bar{\theta}_k-\vartheta_k\rVert^2-2\beta^2\lVert(\bar{\theta}_k-\vartheta_k)^\top\phi_k\rVert^2\bigg\}\\
  &\leq-4\|\bar\theta_k - \vartheta_k\|^2\\
  &\quad + \frac{1}{\N_k}\bigg\{-\left[\lVert\tilde{\theta}_{k+1}^\top\phi_k\rVert-2\lVert\phi_k\rVert\lVert \bar{\theta}_k-\vartheta_k\rVert\right]^2 \\
  &\quad -\left[\frac{\sqrt{\gamma}\beta}{\sqrt{\N_k}}\lVert\nabla L_k(\theta_k)\rVert-\beta\lVert\phi_k\rVert\lVert \bar{\theta}_k-\vartheta_k\rVert\right]^2 \\
  &\quad -\frac{3}{4}\lVert(\theta_{k+1} - \vartheta_k + \vartheta_k - \theta^*)^\top \phi_k \rVert^2-(4+\beta^2)\lVert \bar{\theta}_k-\vartheta_k\rVert^2-2\beta^2\lVert(\bar{\theta}_k-\vartheta_k)^\top\phi_k\rVert^2\bigg\}\\
  &\leq-4\|\bar\theta_k - \vartheta_k\|^2\\
  &\quad + \frac{1}{\N_k}\bigg\{-\left[\lVert\tilde{\theta}_{k+1}^\top\phi_k\rVert-2\lVert\phi_k\rVert\lVert \bar{\theta}_k-\vartheta_k\rVert\right]^2 \\
  &\quad -\left[\frac{\sqrt{\gamma}\beta}{\sqrt{\N_k}}\lVert\nabla L_k(\theta_k)\rVert-\beta\lVert\phi_k\rVert\lVert \bar{\theta}_k-\vartheta_k\rVert\right]^2 \\
  &\quad -\frac{3}{4}\lVert(1-\beta)(\bar\theta_k - \vartheta_k)^\top\phi_k + (\vartheta_k - \theta^*)^\top\phi_k\rVert^2-(4+\beta^2)\lVert \bar{\theta}_k-\vartheta_k\rVert^2-2\beta^2\lVert(\bar{\theta}_k-\vartheta_k)^\top\phi_k\rVert^2\bigg\}\\
  &\leq-4\|\bar\theta_k - \vartheta_k\|^2\\
  &\quad + \frac{1}{\N_k}\bigg\{-\left[\lVert\tilde{\theta}_{k+1}^\top\phi_k\rVert-2\lVert\phi_k\rVert\lVert \bar{\theta}_k-\vartheta_k\rVert\right]^2 \\
  &\quad -\left[\frac{\sqrt{\gamma}\beta}{\sqrt{\N_k}}\lVert\nabla L_k(\theta_k)\rVert-\beta\lVert\phi_k\rVert\lVert \bar{\theta}_k-\vartheta_k\rVert\right]^2 \\
  &\quad -(4+\beta^2)\lVert \bar{\theta}_k-\vartheta_k\rVert^2-2\beta^2\lVert(\bar{\theta}_k-\vartheta_k)^\top\phi_k\rVert^2 \\
  &\quad -\frac{3}{4}\left[\left\|(1 - \beta)(\bar\theta_k - \vartheta_k)^\top\phi_k\right\|^2 + \left\|(\vartheta_k - \theta^*)^\top\phi_k\right\|^2 + 2(1 - \beta)(\bar\theta_k - \vartheta_k)^\top\phi_k\phi_k^\top (\vartheta_k - \theta^*)\right]\bigg\}\\
  &\leq -4\|\bar\theta_k - \vartheta_k\|^2\\
  &\quad + \frac{1}{\N_k}\bigg\{-\left[\lVert\tilde{\theta}_{k+1}^\top\phi_k\rVert-2\lVert\phi_k\rVert\lVert \bar{\theta}_k-\vartheta_k\rVert\right]^2 \\
  &\quad -\left[\frac{\sqrt{\gamma}\beta}{\sqrt{\N_k}}\lVert\nabla L_k(\theta_k)\rVert-\beta\lVert\phi_k\rVert\lVert \bar{\theta}_k-\vartheta_k\rVert\right]^2 \\
  &\quad -(4+\beta^2)\lVert \bar{\theta}_k-\vartheta_k\rVert^2-2\beta^2\lVert(\bar{\theta}_k-\vartheta_k)^\top\phi_k\rVert^2 \\
  &\quad -\frac{3}{4}\left[4\left\|(1 - \beta)(\bar\theta_k - \vartheta_k)^\top\phi_k\right\|^2 + \frac{1}{4} \left\|(\vartheta_k - \theta^*)^\top\phi_k\right\|^2 + 2(1 - \beta)(\bar\theta_k - \vartheta_k)^\top\phi_k\phi_k^\top (\vartheta_k - \theta^*)\right] \\
  &\quad +\frac{9}{4}\left\|(1 - \beta)(\bar\theta_k - \vartheta_k)^\top\phi_k\right\|^2 - \frac{9}{16} \left\|(\vartheta_k - \theta^*)^\top\phi_k\right\|^2\bigg\}\\
  &\leq-\frac{7}{4}\|\bar\theta_k - \vartheta_k\|^2\\
  &\quad + \frac{1}{\N_k}\bigg\{-\left[\lVert\tilde{\theta}_{k+1}^\top\phi_k\rVert-2\lVert\phi_k\rVert\lVert \bar{\theta}_k-\vartheta_k\rVert\right]^2 - \left[\frac{\sqrt{\gamma}\beta}{\sqrt{\N_k}}\lVert\nabla L_k(\theta_k)\rVert-\beta\lVert\phi_k\rVert\lVert \bar{\theta}_k-\vartheta_k\rVert\right]^2 \\
  &\quad -\left(\frac{25}{4}+\beta^2\right)\lVert \bar{\theta}_k-\vartheta_k\rVert^2-2\beta^2\lVert(\bar{\theta}_k-\vartheta_k)^\top\phi_k\rVert^2 \\
  &\quad -\frac{3}{4}\left[2(1-\beta)\left\|(\theta_k - \vartheta_k)^\top\phi_k\right\| - \frac{1}{2}\left\|(\vartheta_k - \theta^*)^\top\phi_k\right\|\right]^2 \\
  &\quad - \frac{9}{16} \left\|(\vartheta_k - \theta^*)^\top\phi_k\right\|^2\bigg\}\\
  &\leq 0
\end{align*}
  proving Theorem \ref{theo:2} (i).

  Summing $V_k$ from $V_{k-\Delta T}$ to $V_k$, we obtain
\begin{align}
  &V_k - V_{k - \Delta T}\nonumber\\
  &= V_k - V_{k-1} + \cdots + V_{k-\Delta T + 1} - V_{k-\Delta T}\nonumber\\
  &\leq -c_3\!\sum_{i=k-\Delta T}^{k-1}\|\bar\theta_i - \vartheta_i\|^2 - c_4\!\sum_{i=k-\Delta T}^{k-1} \frac{1}{\N_i}\|(\vartheta_i - \theta^*)^\top\phi_i\|^2\nonumber\\
  &\leq -c_3\frac{1}{\Delta T}\Bigg(\underbrace{\sum_{i=k-\Delta T}^{k-1}\|\bar\theta_i - \vartheta_i\|}_{W_1}\Bigg)^2 -c_4\frac{1}{\Delta T}\Bigg(\underbrace{\sum_{i=k-\Delta T}^{k-1} \frac{1}{\sqrt{\N_i}}\|(\vartheta_i - \theta^*)^\top\phi_i\|}_{W_2}\Bigg)^2,
    \label{eq:ub}
\end{align}
where we applied the Cauchy-Schwarz inequality for the second inequality.

\noindent\textbf{Case 1}: $\|\vartheta_{k-\Delta T}-\theta^*\|^2\geq \lambda\gamma V_{k-\Delta T}$, where $\lambda$ satisfies \eqref{eq:lambda2}.

\noindent\textbf{(a)} If $W_1 \geq \eta \|\vartheta_{k-\Delta T} - \theta^*\|$, where $\eta$ satisfies \eqref{eq:eta2}, then
\begin{align*}
  V_k - V_{k-\Delta T} &\leq -c_3\frac{1}{\Delta T}\eta^2 \|\vartheta_{k-\Delta T} - \theta^*\|^2\\
                       &\leq -c_3\frac{1}{\Delta T}\lambda\gamma\eta^2 V_{k-\Delta T}
\end{align*}
and
\begin{align}
  V_k \leq \left(1 - c_3\frac{1}{\Delta T}\lambda\gamma\eta^2\right) V_{k-\Delta T}
  \label{eq:v21}
\end{align}

\noindent\textbf{(b)} If $W_1 \leq \eta \|\vartheta_{k-\Delta T} - \theta^*\|$, then
\begin{align*}
  W_2 &= \sum_{i = k - \Delta T}^{k-1}\frac{1}{\sqrt{\N_i}}\|(\vartheta_i - \vartheta_{k - \Delta T} + \vartheta_{k - \Delta T} - \theta^*)^\top\phi_i\|\\
  &\geq \sum_{i = k - \Delta T}^{k-1}\frac{1}{\sqrt{\N_i}}\|(\vartheta_{k-\Delta T} - \theta^*)^\top\phi_i\| - \sum_{i = k - \Delta T}^{k-1}\frac{1}{\sqrt{\N_i}}\|(\vartheta_i - \vartheta_{k - \Delta T})^\top\phi_i\|\\
  &\geq \Delta T \epsilon_2\|\vartheta_{k-\Delta T} - \theta^*\| - \Delta T \sup_{i\in[k - \Delta T, k-1]}\|\vartheta_i - \vartheta_{k - \Delta T}\|\\
  &\geq \Delta T \epsilon_2\|\vartheta_{k-\Delta T} - \theta^*\| - \Delta T \sum_{i=k - \Delta T}^{k-2}\|\vartheta_{i+1} - \vartheta_i\|\\
  &\geq \Delta T \epsilon_2\|\vartheta_{k-\Delta T} - \theta^*\| - \gamma \Delta T \sum_{i=k - \Delta T}^{k - 1}\left\|\frac{\phi_i\phi_i^\top\tilde{\theta}_{i+1}}{\N_i}\right\|\\
  &\geq \Delta T \epsilon_2\|\vartheta_{k-\Delta T} - \theta^*\| - \gamma\Delta T \sum_{i = k - \Delta T}^{k - 1}\frac{1}{\sqrt{\N_i}}\|(\theta_{i+1} - \vartheta_i + \vartheta_i - \theta^*)^\top\phi_i\|\\
  &\geq \Delta T \epsilon_2\|\vartheta_{k-\Delta T} - \theta^*\| - \gamma\Delta T\sum_{i=k-\Delta T}^{k-1}\frac{1-\beta}{\sqrt{\N_i}}\|(\bar\theta_i - \vartheta_i)^\top\phi_i\| - \gamma\Delta T \sum_{i=k-\Delta T}^{k-1}\!\frac{1}{\sqrt{\N_i}}\|(\vartheta_i - \theta^*)^\top\phi_i\| \\
  &\geq \Delta T \epsilon_2\|\vartheta_{k-\Delta T} - \theta^*\| - \gamma(1 - \beta)\Delta T\underbrace{\sum_{i=k-\Delta T}^{k-1}\|\bar\theta_i - \vartheta_i\|}_{W_1} - \gamma\Delta T \underbrace{\sum_{i=k-\Delta T}^{k-1}\frac{1}{\sqrt{\N_i}}\|(\vartheta_i - \theta^*)^\top\phi_i\|}_{W_2}
\end{align*}
Note that from \eqref{eq:eta2}, $\epsilon_2 > \gamma\eta(1 - \beta)$. Therefore
\begin{align}
  W_2 \geq \frac{\Delta T}{1 + \gamma\Delta T}[\epsilon_2 - \gamma\eta(1 - \beta)]\|\vartheta_{k-\Delta T} - \theta^*\| \geq 0.
  \label{eq:w2}
\end{align}
Substitute \eqref{eq:w2} into \eqref{eq:upper}, we obtain
\begin{align*}
  V_k - V_{k-\Delta T} &\leq -c_4\frac{1}{\Delta T} W_2^2\nonumber\\
                       &\leq -c_4\frac{\Delta T}{(1 + \gamma\Delta T)^2}[\epsilon_2 - \gamma\eta(1 - \beta)]^2\|\vartheta_{k-\Delta T} - \theta^*\|^2\nonumber\\
                       &\leq -c_4\frac{\Delta T}{(1 + \gamma\Delta T)^2}[\epsilon_2 - \gamma\eta(1 - \beta)]^2\lambda\gamma V_{k-\Delta T}\nonumber
\end{align*}
Thus
\begin{equation}
  \label{eq:v22}
  V_k \leq \left\{1 -c_4\frac{\Delta T}{(1 + \gamma\Delta T)^2}[\epsilon_2 - \gamma\eta(1 - \beta)]^2\lambda\gamma\right\} V_{k-\Delta T}
\end{equation}
\textbf{Case 2}: $\|\vartheta_{k-\Delta T}-\theta^*\|^2\leq \lambda\gamma V_{k-\Delta T}$

From
\begin{equation*}
  V_{k-\Delta T} = \frac{1}{\gamma}\|\theta_{k-\Delta T} - \vartheta_{k-\Delta T}\|^2 + \frac{1}{\gamma}\|\vartheta_{k-\Delta T} - \theta^*\|^2
\end{equation*}
we immediately get
\begin{equation}
  \|\theta_{k-\Delta T} - \vartheta_{k-\Delta T}\|^2 \geq (1 - \lambda)\gamma V_{k-\Delta T}
  \label{eq:31}
\end{equation}
\noindent \textbf{(a)} If $W_2 \geq \zeta\|\theta_{k-\Delta T} - \vartheta_{k-\Delta T}\|$, where $\zeta$ satisfies \eqref{eq:zeta2}, then we have
  \begin{align*}
    V_k - V_{k-\Delta T} &\leq -c_4\frac{1}{\Delta T}\zeta^2\|\theta_{k-\Delta T} - \vartheta_{k-\Delta T}\|^2\\
                         &\leq -c_4\frac{1}{\Delta T}\zeta^2(1-\lambda)\gamma V_{k-\Delta T}
  \end{align*}
  Thus
  \begin{equation}
    V_k \leq \left[1-c_4\frac{1}{\Delta T}\gamma\zeta^2(1 - \lambda)\right]V_{k-\Delta T}
    \label{eq:v23}
  \end{equation}
  
\noindent \textbf{(b)} If $W_2\leq \zeta\|\theta_{k- \Delta T} - \vartheta_{k-\Delta T}\|$, then
  \begin{align*}
    W_1 &= \sum_{i=k-\Delta T}^{k-1}\left\|\bar\theta_i - \vartheta_i\right\|\\
        &= \sum_{i=k-\Delta T}^{k-1}\Big\|\theta_{k-\Delta T} - \vartheta_{k-\Delta T} + \bar\theta_i - \theta_{k-\Delta T} - (\vartheta_i - \vartheta_{k-\Delta T})\Big\|\\
        &\geq \sum_{i=k-\Delta T}^{k-1}\|\theta_{k-\Delta T} - \vartheta_{k-\Delta T}\| - \sum_{i=k-\Delta T}^{k-1}\|\vartheta_i - \vartheta_{k-\Delta T}\| - \sum_{i=k-\Delta T}^{k-1}\|\bar\theta_i - \theta_{k-\Delta T}\| \\
        &\geq \Delta T \|\theta_{k-\Delta T} - \vartheta_{k-\Delta T}\| - \sum_{i=k-\Delta T}^{k-1}\|\vartheta_i - \vartheta_{k-\Delta T}\| - \sum_{i=k-\Delta T}^{k-1}\left\|\theta_i - \theta_{k-\Delta T} - \gamma\beta\frac{\phi_i\phi_i^\top\tilde{\theta}_i}{\N_i}\right\|\\
        &\geq \Delta T \|\theta_{k-\Delta T} - \vartheta_{k- \Delta T}\| - \Delta T \sup_{i\in[k-\Delta T, k-1]}\|\vartheta_i - \vartheta_{k-\Delta T}\| - \Delta T \sup_{i\in[k-\Delta T, k-1]}\left\|\theta_i - \theta_{k-\Delta T}\right\| \\
        &\quad - \gamma\beta\sum_{i=k-\Delta T}^{k-1}\left\|\frac{\phi_i\phi_i^\top\tilde{\theta}_i}{\N_i}\right\|\\
        &\geq \Delta T \|\theta_{k-\Delta T} - \vartheta_{k-\Delta T}\| -  \Delta T\sum_{i=k-\Delta T}^{k-2}\|\vartheta_{i+1} - \vartheta_i\| - \Delta T\sum_{i=k-\Delta T}^{k-2}\|\theta_{i+1} - \theta_i\|\\
        &\quad - \gamma\beta\sum_{i=k-\Delta T}^{k-1}\left\|\frac{\phi_i\phi_i^\top\tilde{\theta}_i}{\N_i}\right\|\\
    &\geq \Delta T \|\theta_{k-\Delta T} - \vartheta_{k-\Delta T}\| - \gamma\Delta T\sum_{i=k-\Delta T}^{k-1}\left\|\frac{\phi_i\phi_i^\top\tilde{\theta}_{i+1}}{\N_i}\right\| - \beta\Delta T\sum_{i=k-\Delta T}^{k-1}\left\|(\bar\theta_i - \vartheta_i) + \gamma\frac{\phi_i\phi_i^\top\tilde{\theta}_i}{\N_i}\right\|\\
        &\quad - \gamma\beta\sum_{i=k-\Delta T}^{k-1}\left\|\frac{\phi_i\phi_i^\top\tilde{\theta}_i}{\N_i}\right\|\\
        &\geq \Delta T \|\theta_{k-\Delta T} - \vartheta_{k-\Delta T}\| - \gamma\Delta T \sum_{i=k-\Delta T}^{k-1}\frac{1}{\sqrt{\N_i}}\|(\vartheta_i - \theta^*)^\top\phi_i\| - \gamma\Delta T\sum_{i=k-\Delta T}^{k-1}\frac{1-\beta}{\sqrt{\N_i}}\|(\bar\theta_i - \vartheta_i)^\top\phi_i\|\\
        &\quad -\beta\Delta T\sum_{i=k-\Delta T}^{k-1}\|\bar\theta_i - \vartheta_i\| - \gamma\beta\Delta T \sum_{i=k-\Delta T}^{k-1}\left\|\frac{\phi_i\phi_i^\top\tilde{\theta}_i}{\N_i}\right\| - \gamma\beta\sum_{i=k-\Delta T}^{k-1}\left\|\frac{\phi_i\phi_i^\top\tilde{\theta}_i}{\N_i}\right\|\\
        &\geq \Delta T \|\theta_{k-\Delta T} - \vartheta_{k-\Delta T}\| - \gamma\Delta T W_2 - \gamma(1 - \beta)\Delta T W_1 - \beta\Delta T W_1 - \gamma\beta(1 + \Delta T)\underbrace{\sum_{i=k-\Delta T}^{k-1}\left\| \frac{\phi_i\phi_i^\top\tilde{\theta}_i}{\N_i} \right\|}_{W_3}
  \end{align*}
  The term $W_3$ can be upper-bounded by
\begin{align*}
  W_3 &\leq \sum_{i=k-\Delta T}^{k-1}\frac{1}{\sqrt{\N_i}}\left\|(\vartheta_i - \theta^*)^\top\phi_i\right\| + \sum_{i=k-\Delta T}^{k-1}\frac{1}{\sqrt{\N_i}}\left\|(\theta_i - \vartheta_i)^\top\phi_i\right\|\\
      &\leq \sum_{i=k-\Delta T}^{k-1}\frac{1}{\sqrt{\N_i}}\left\|(\vartheta_i - \theta^*)^\top\phi_i\right\| + \sum_{i=k-\Delta T}^{k-1}\frac{1}{\sqrt{\N_i}}\left\|\phi_i^\top\left(\bar\theta_i - \vartheta_i + \gamma\beta\frac{\phi_i\phi_i^\top\tilde{\theta}_i}{\N_i}\right)\right\|\\
      &\leq W_2 + W_1 + \gamma\beta W_3
\end{align*}
Therefore, we have
\begin{align*}
  W_3 \leq \frac{1}{1 - \gamma\beta}(W_1 + W_2).
\end{align*}
Thus
\begin{align*}
  W_1 &\geq \Delta T \|\theta_{k-\Delta T} - \vartheta_{k-\Delta T}\| - \gamma\Delta T W_2 - \gamma(1 - \beta)\Delta T W_1 - \beta\Delta T W_1 - \frac{\gamma\beta(1 + \Delta T)}{1 - \gamma\beta}(W_1 + W_2)
\end{align*}
and
\begin{align*}
  W_1 &\geq \frac{\Delta T - \gamma\zeta\Delta T - \frac{\gamma\beta\zeta(1 + \Delta T)}{1 - \gamma\beta}}{1 + \gamma(1 - \beta)\Delta T + \beta\Delta T + \gamma\beta\frac{1 + \Delta T}{1 - \gamma\beta}}\|\theta_{k-\Delta T} - \vartheta_{k-\Delta T}\|\\
      &= \xi\|\theta_{k-\Delta T} - \vartheta_{k-\Delta T}\|
\end{align*}
From \eqref{eq:zeta2}, $\xi > 0$. Considering \eqref{eq:ub},
\begin{align*}
  V_k - V_{k-\Delta T} &\leq -c_3\frac{1}{\Delta T}W_1^2\\
                       &\leq -c_3\frac{1}{\Delta T}\xi^2\|\theta_{k-\Delta T} - \vartheta_{k-\Delta T}\|^2\\
                       &\leq -c_3\frac{1}{\Delta T}\xi^2(1 - \lambda)\gamma V_{k-\Delta T}
\end{align*}
which then leads to
\begin{equation}
  \label{eq:v24}
  V_k \leq \left[1 -c_3\frac{1}{\Delta T}\xi^2(1 - \lambda)\gamma\right] V_{k-\Delta T}
\end{equation}
Considering \eqref{eq:v21}, \eqref{eq:v22}, \eqref{eq:v23} and \eqref{eq:v24}, we have
\begin{equation*}
  V_k \leq (1 - \mu) V_{k-\Delta T},
\end{equation*}
where $\mu$ is defined in \eqref{eq:mu2}. Collecting terms, we obtain
\begin{equation*}
  V_k \leq \exp\left(-\mu\left\lfloor\frac{k}{\Delta T}\right\rfloor\right)V_0,
\end{equation*}
proving Theorem \ref{theo:2} (ii).
\end{proof}
\begin{remark}
  While Theorem \ref{theo:1} addressed the accelerated learning property of the HB algorithm, Theorem \ref{theo:2} addressed the same property using the NA algorithm. The difference between the two algorithms is that while HB computes gradient first and momentum next, NA does the computation in the reversed order. This reversed-order computation leads to an extra gradient term, which appears in the third equation in \eqref{eq:ht}. It is the distinction between $\bar\theta_k$ and $\theta_k$ that introduces additional challenges in proving the accelerated learning property of the NA algorithm. This required an addition of two subcases in case 2 where we have shown that in each subcase, because of the structure of the extra gradient and properties of persistent excitation, the properties of $\bar\theta_k$ still transfer to $\theta_k$, resulting in exponentially fast parameter convergence.
\end{remark}

\section{Numerical Simulations}
\label{sec:sim}

\subsection{Output Error Convergence with Constant Inputs}
We first establish the fast convergence of the output error to zero with the HB and the NA algorithms using numerical studies, which was shown analytically in \cite{Gaudio20AC}. Consider a linear regression problem with parameter $\theta^* = [20, -3, 1]^\top$. We assume that the regressor $\phi_k$ is piecewise constant, with a jump at iteration 251 from $[1, -2, 1]^\top$ to $[2, -1, -2]^\top$. We compare the three algorithms in \eqref{eq:gd}, \eqref{eq:hb} and \eqref{eq:ht}. The hyperparameters are chosen as follows:
\begin{equation}
\begin{split}
  \beta &= 0.5,\\
  \gamma &= 0.0938,\\
  \alpha &= \gamma\beta = 0.0469
\end{split}
\label{eq:hyper}
\end{equation}

\begin{figure}[!htb]
  \centering
  \includegraphics[width=0.7\linewidth]{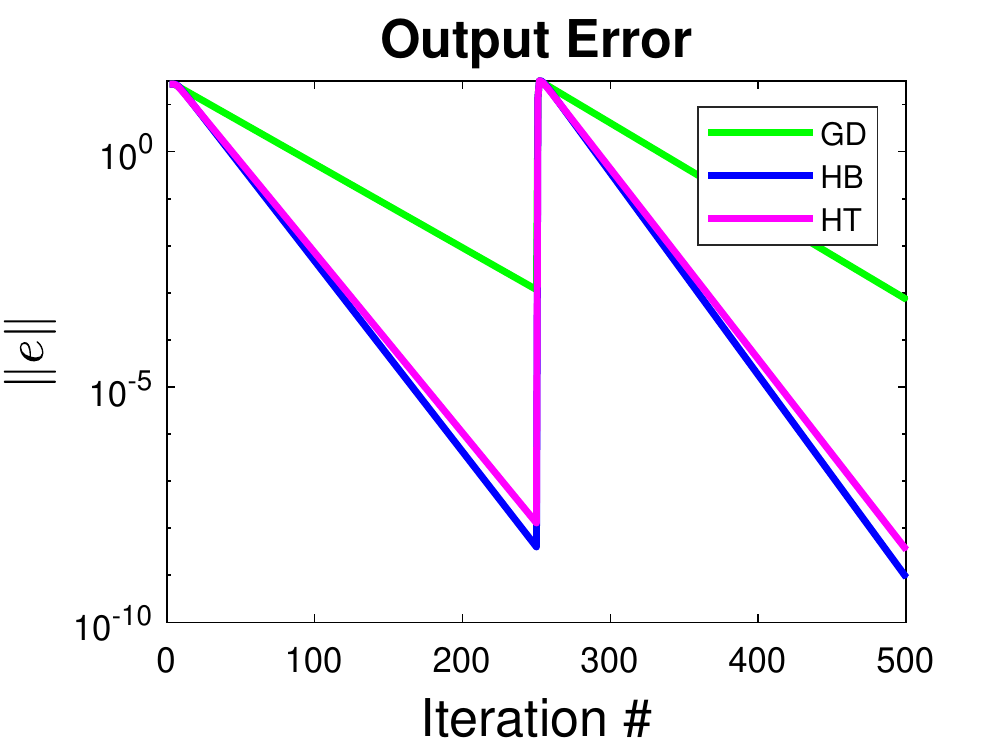}
  \caption{Output errors of the three algorithms.}
  \label{fig:2}
\end{figure}

It is clear from Fig. \ref{fig:2}, which shows the output error in log scale, that both HB and NA are significantly faster than normalized gradient descent algorithm. This corroborates the results in \cite{Polyak64}, \cite{Nesterov_1983} and \cite{Gaudio20AC}.

\subsection{Parameter Error Convergence with Persistent Excitation}
Consider the same regression problem with parameter $\theta^* = [20, -3, 1]^\top$ but now with persistently exciting regressors. The inputs for identifying parameters are defined as

$$\phi_k = [1, 2\sin(k), 2\sin(2k)]^\top.$$
We adopt the same hyperparameters as defined in \eqref{eq:hyper}. Fig. \ref{fig:1} shows parameter error convergence for the three algorithms. 

\begin{figure}[!htb]
  \centering
  \includegraphics[width=0.7\linewidth]{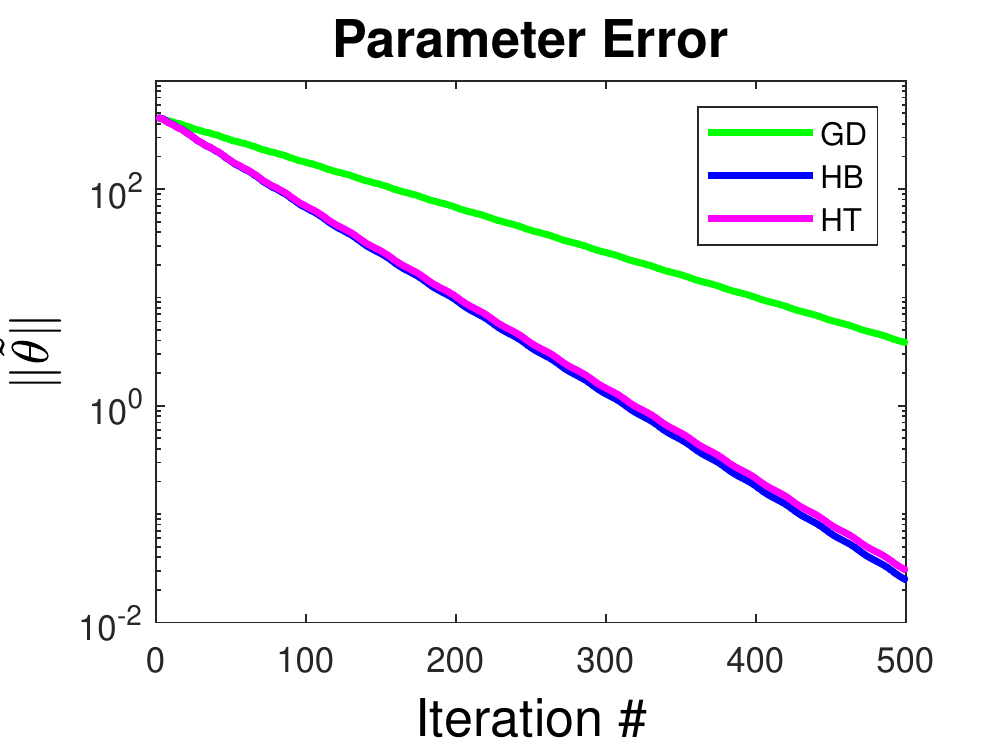}
  \caption{Parameter errors of the three algorithms.}
  \label{fig:1}
\end{figure}

From the simulations, we can see that both HB and NA have very similar convergence behavior. It is interesting to note that the two algorithms are faster than normalized GD in parameter convergence as well. This demonstrates that HT algorithms are of immense value, as they are capable of resulting in \textit{both} accelerated performance and accelerated learning.

\section{Conclusion}
\label{sec:conclusion}
Two high-order tuning algorithms, one based on Polyak's Heavy Ball method and another based on Nesterov's acceleration, have been discussed in this paper. The main results of the paper are that the parameter estimates based on the two algorithms are guaranteed to be bounded, and that if the regressors are persistently exciting, they are proved to converge to the true values. Numerical results show that these two algorithms result both in accelerated performance and accelerated learning when compared to the normalized gradient descent algorithm. These results clearly demonstrate the strong potential that these high-order tuners can have in real-time decision making. For future work, we will look into how a regularization term in the loss function can affect the overall performance of the mentioned algorithms  when disturbances and unmodeled dynamics are present.

\bibliographystyle{IEEEtran}
\bibliography{IEEEabrv,References}

\end{document}